\definecolor{myblue}{rgb}{0.09,0.32,0.44} 
\def\bl{\begin{lemma}}
\def\el{\end{lemma}}
\def\bth{\begin{theorem}}
\def\eth{\end{theorem}}
\def\bc{\begin{corollary}}
\def\ec{\end{corollary}}
\def\bcj{\begin{conjecture}}
\def\ecj{\end{conjecture}}
\def\bpr{\begin{proposition}}
\def\epr{\end{proposition}}
\def\bde{\begin{definition}}
\def\ede{\end{definition}}
\newcommand{\diam}{\mbox{\rm diam}}
\newcommand{\girth}{\mbox{\rm girth}}
\newcommand{\SL}{{\mathrm{SL}}}
\newcommand{\Cay}{{\mathrm{Cay}}}
\newtheorem{theorem}{Theorem}[section]
\newtheorem{definition}{Definition}[section]
\newtheorem{lemma}[theorem]{Lemma}
\newtheorem{corollary}[theorem]{Corollary}
\newtheorem{proposition}[theorem]{Proposition}
\newtheorem{conjecture}[theorem]{Conjecture}
\newtheorem*{theorem*}{Theorem}
\theoremstyle{definition}
\theoremstyle{remark}
\newtheorem{remark}[theorem]{Remark}
\numberwithin{equation}{section}
\begin{document}
\title{Expander spanning subgraphs with large girth}
\author{Itai Benjamini Mikolaj Fraczyk and G\'abor Kun}

\email{itai.benjamini@gmail.com}
\email{mfraczyk@math.uchicago.edu}
\email{kungabor@renyi.hu}

\date{03.12.2021}

\address{The Weizmann Institute, Rehovot, Israel}

\address{Department of Mathematics, University of Chicago, 5734 S. University Avenue, 
Chicago, IL, 60637, USA}

\address{Alfr\'ed R\'enyi Institute of Mathematics, H-1053 Budapest, Re\'altanoda u. 13-15., Hungary\\ Institute of Mathematics, E\"otv\"os L\'or\'and University, P\'azm\'any P\'eter s\'et\'any 1/c, H-1117 Budapest, Hungary}

\thanks{The first author thanks the Israeli Science Foundation for support. The third author is supported by the European Research Council (ERC) Advanced Grant No. 741420, by the J\'anos Bolyai Scholarship of the Hungarian Academy of Sciences and by the \'UNKP-20-5 New National Excellence Program of the Ministry of Innovation and Technology from the source of the National Research, Development and Innovation Fund.}

\begin{abstract}

 We conjecture that in any finite graph with large Cheeger constant we can delete a proportion of edges in such a way that the remaining graph has large girth and retains good expansion properties. We prove this when the expansion is large enough in terms of the maximum degree. The condition on expansion covers, for example, large random $d$-regular graphs. Our proof relies on the Lov\'asz Local Lemma.
\end{abstract}

\maketitle

\section{Introduction}
 Families of expander graphs play a central role in modern graph theory and geometric group theory. We take a closer look at the interaction of the expansion condition and the large girth condition. At a first glance these two properties point in the same direction. A graph with many short loops will tend to have a small spectral gap and, as a consequence, small expansion. This intuition is confirmed in the case of Ramanujan graphs which are $d$-regular graphs with spectral gap at least $d-2\sqrt{d-1}$, nearly best possible. It was proved in \cite{AGV} that such graphs have a small number of short cycles. In contrast, many known constructions of expander graphs produce sequences of graphs of bounded girth. For example, the two arguably most popular constructions - Cayley graphs of quotients of groups with  Kazhdan's Property $(T)$ and random $d$-regular graphs - exhibit this behaviour. It is even possible to construct sequences of Ramanujan graphs of girth $1$ \cite{Gl}. In some contexts (see for example \cite{ALM}) it is desirable to have a sequence of expander graphs with girth tending to infinity, which excludes the aforementioned constructions. Our work tries to remedy this problem. In our main results, Theorems \ref{main}, \ref{cor-Spectral}, we prove that for any expander sequence satisfying a mild condition on the expansion or on the number of short cycles it is always possible to "thin out" the graphs by deleting a proportion of edges in such a way that the girth is large and the Cheeger constant stays above certain explicit positive threshold. 

\subsection{Main results.} Let $G$ be a finite graph with vertex set $V(G)$. The maximum degree of $G$ is denoted by $\Delta(G)$. The Cheeger constant of $G$ is defined as \[h(G):=\inf_{\substack{S\subset V(G),\\ 0<|S|\leq |V(G)|/2}} \frac{|\partial_G S|}{|S|},\] where $\partial_G S$ denotes the edge boundary of $S$. A spanning subgraph of $G$ is a subgraph containing all the vertices of $G$. We conjecture the following:

\begin{conjecture}\label{mainConj} For every $\varepsilon>0$ and natural number $\Delta$ there exist $\kappa, K>0$ such that if $G$ is a finite graph with $\Delta(G) \leq \Delta$ and
$h(G)>\varepsilon$ then $G$ admits a spanning subgraph $H$ with \[h(H) > \kappa\quad \textrm{ and }\quad\diam(H) <K \girth(H).\]
\end{conjecture}

We provide some motivation and examples in section \ref{sec-motivatiom}.
 We can partially prove the conjecture when the expansion is somewhat large as a function of the maximum degree, see Theorem~\ref{cor-Spectral}. First we prove a theorem about graphs that contain few short cycles locally. By a cycle we mean a $2$-regular simple finite subgraph without a starting vertex or an orientation.

\begin{theorem}~\label{main}
 Let $\Delta, g$ be positive integers, $\delta$ positive and $G$ a finite graph. Assume that $\Delta(G) \leq \Delta$ and for every $k \leq g$ every vertex $x$ is contained 
in at most $\big( \frac{\Delta}{8\delta} \big)^k$ cycles of length $k$. Then $G$ admits a spanning subgraph $H$ with girth at least $g$ such that 
\[|\partial_H S| \geq \frac{\delta}{2\Delta}|\partial_G S|-(2\log(\Delta)+4)|S|\] holds  for every $S \subseteq V(G)$.
\end{theorem}
All the logarithms are the natural logarithms.
Note that the theorem is meaningless if $\delta<4\log(\Delta)$ because then $H$ can be the edgeless spanning subgraph.


\begin{corollary}~\label{Cheeger}
Let $\Delta, g$ be positive integers, let $\delta>0$ and let $G$ be a finite graph. Assume that $\Delta(G) \leq \Delta$, $h(G) \geq \frac{8\Delta(\log(\Delta)+2)}{\delta}$ and for every $k \leq g$ every vertex $x$ is contained in at most $\big( \frac{\Delta}{8\delta} \big)^k$ cycles of length $k$. Then $G$ admits a spanning subgraph $H$ with girth at least $g$ and $h(H) \geq \frac{\delta}{4\Delta} h(G)$.
\end{corollary}


The number of cycles can be controlled using the spectral conditions so it is possible to combine the condition on the cycles and the Cheeger constant into one. Let $\mathcal A, \mathcal D, \mathcal L$ be the adjacency operator on $G$, the pointwise multiplication by the degree and the combinatorial Laplacian on $G$, respectively. These are  bounded linear operators on $ L^2(V(G))$ given by 
\begin{equation}\label{ADLdef} \mathcal A f(v)=\sum_{v'\sim v} f(v'),\quad \mathcal D f(v)=\deg(v)f(v),\quad \mathcal L f(v)=\sum_{v'\sim v} (f(v)-f(v')).\end{equation}
We note that $\mathcal L=\mathcal D-\mathcal A$. For a finite graph $G$ with $n$ vertices we shall list the eigenvalues of $\mathcal L$ by $0=\lambda_1\leq \lambda_2\leq \ldots \leq \lambda_n$. 
In a finite graph $G$, the Cheeger constant $h(G)$ and the second eigenvalue $\lambda_2(G)$ of $\mathcal L$ are related by the following inequality (see, e.g., \cite{Dod})
\[\lambda_2(G)/2\leq h(G)\leq \sqrt{2\Delta\lambda_2(G)}.\]

 We state our main theorem about graphs with somewhat large expansion.

\begin{theorem}\label{cor-Spectral}
Let $\Delta, g$ be positive integers and let $G$ be a finite graph. Assume that $\Delta(G) \leq \Delta$, $\lambda_2(G) \geq \Delta-\frac{\Delta}{16(\log \Delta+2)}$ and $|V(G)|$ is at least $\max\{ \Delta^{k+1}(\Delta-\lambda_2)^{-k-1} | k=0,\ldots, \max\{g, 8\log \Delta\}\}$. Then $G$ admits a spanning subgraph $H$ of girth at least $g$  with \[h(H) \geq \left(\frac{\lambda_2(G)(\Delta-\lambda_2(G))^{-1}}{64}-2\log \Delta -4\right).\]
In particular, if $\lambda_2(G) \geq \Delta-\frac{\Delta}{192(\log \Delta+2)}$, then $h(H)\geq 0.99(\log \Delta +2).$
\end{theorem}
\begin{remark} Ramanujan graphs of degree $d$  have $\lambda_2(G)\geq d-2\sqrt{d-1}$ and the right hand side is larger than $d-\frac{d}{16(\log d+2)}$ for large enough $d$, so Theorem \ref{cor-Spectral} is not vacuous.  It even gives a subgraph with expansion of order $\Omega(\sqrt d)$, on par with Ramanujan graphs. Random $d$-regular graphs are almost Ramanujan in the sense that $\lambda_2(G)= d-{2\sqrt{d-1}}+O(1)$ with high probability \cite{Puder}. Therefore, by Theorem \ref{cor-Spectral}, large enough random $d$-regular graphs admit spanning subgraphs of girth proportional to the diameter and expansion of order  \[\left(\frac{d-2\sqrt{d-1}+O(1)}{64(2\sqrt{d-1}+O(1))}-2\log d -4\right)= \Omega(\sqrt{d}).\]
\end{remark}

\subsection{Idea of the proof.} Our proof is an advanced version of the proof of Theorem 7 in \cite{K}, where the goal was to find a spanning subgraph $H$ with large girth, but there was no condition on the Cheeger constant of $H$. We also need to check that there are no sets with small expansion, and our argument here is similar to the approach of Bilu and Linial from \cite{BL}. Both large girth and expansion are verified using the Lov\'asz Local Lemma. Given that the number of conditions one has to check is a priori exponential in the number of vertices, we find it interesting that the Local Lemma helps to have a large Cheeger constant.

If the expansion is small a natural approach to attack the conjecture is to work with a power graph. Let $A$ be the adjacency matrix of $G$. The power graph $G^{(p)}$ is the graph on the same vertex set with the adjacency matrix $A^p$. When $G$ is $d$-regular and $p$ is odd, we have $(d^p-\lambda_2(G^{(p)}))=(d-\lambda_2(G))^p$. By taking large enough power we can always fall into the range covered by Theorem \ref{cor-Spectral}. This can be used to get some information on the original graph. A $K$-Lipschitz subgraph of a graph $G$ is a graph $H$ on the same vertex set $V(G)$ such that any edge in $H$ connects points that are at distance at most $K$ in $G$. Note that we do not require $H$ to be an actual subgraph of $G$. For example, any subgraph of the $p$-th power graph $G^{(p)}$ is a $p$-Lipshitz subgraph of $G$.
A spanning Lipschitz subgraph with large girth $H$ was found in \cite{K}, provided that $G$ is 
regular.
The core of the problem is that we do not know that if a power graph, say $G^{(2)}$, admits a spanning expander subgraph with large girth then so does $G$.

\subsection{Motivating examples}\label{sec-motivatiom}
Benjamini and Schramm \cite{BS} showed that infinite graphs with positive Cheeger constant contain a tree with positive Cheeger constant, solving a problem of Deuber, Simonovits and T. S\'os  \cite{DSS}. 

In \cite{K} a factor of iid version of this theorem was proved for Cayley graphs of non-amenable groups with large Cheeger constant. 
The measurable solution of Gaboriau and Lyons \cite{GL} to the von Neumann problem requires less: there is no bound on the length
of the edges in the spanning forest (but it should be a factor of iid) and it will be the Schreier graph of an almost free action of the free group. 
On the other hand, Gaboriau and Lyons showed that this action is ergodic. Their result had many applications to operator algebras, 
see Houdayer's Bourbaki seminar paper \cite{H}.

Many interesting examples of expander families are provided by Cayley graphs of congruence quotients of arithmetic groups. For example, for any  $d\geq 2$ and a set $S \subset {\rm SL}_d(\mathbb Z)$ spanning a Zariski dense subgroup the family of Cayley graphs  ${\rm Cay}(\SL_d(\mathbb{F}_p), S)$ is an expander family with girth $\Omega (\log p)$. This is a rather deep fact originating in the work of Bourgain and Gamburd \cite{BoGa}, expanded upon by Bourgain and Varj\'u \cite{BV}. For some of these arithmetic examples we can find evidence toward Conjecture \ref{mainConj} by hand. Let us take a closer look at $\SL_2(\mathbb Z).$ Let $S\subset \SL_2(\mathbb Z)$ be a finite symmetric set generating a Zariski dense subgroup. By the work of Bourgain and Gamburd \cite{BoGa} it is known that the sequence of graphs $G_p:=\Cay(\SL_2(\mathbb Z/p\mathbb Z), S)$ is an expander sequence as $p$ varies among sufficiently big primes. Let $\Gamma$ be the subgroup generated by $S$. The conjecture predicts that we should be able to find spanning subgraphs of $G_p$ of large girth which are still good expanders. In this example we can do something slightly weaker. We can find a sequence of expander graphs as spanning Lipschitz subgraphs $H_p$ of girth proportional to $\log p$. To find such graphs take a finite power of $S$ that contains two elements, say $a$ and $b$, that generate a free Zariski dense subgroup. This is always possible by the Tits alternative. Put $H_p:=\Cay(\SL_2(\mathbb Z/p\mathbb Z), \{a,b,a^{-1},b^{-1}\}).$ By \cite{BoGa} $H_p$ is an expander sequence. Since $\langle a,b\rangle$ is free, the girth must go to infinity. Indeed, any relation on $a,b$ that holds modulo infinitely many primes would have to hold in $\SL_2(\mathbb Z)$, which contradicts the freeness. In fact, it is easy to show that the girth of these graphs grows as $\Omega(\log p)$. The expansion constant provided by the method of Bourgain-Gamburd \cite{BoGa} and the following improvements (see e.g. \cite{BV}) is not explicit and likely very small. In comparison, our Theorem \ref{cor-Spectral} allows one to find expanding subgraphs of $\Cay(\SL_2(\mathbb Z/p\mathbb Z),\Sigma)$ of girth $\gg \log p$ of expansion constant $\gg \log d$ provided that the initial generating set $\Sigma$ has sufficiently large expansion, what can always be arranged by taking a power of $\Sigma$.

Arzhantseva and Biswas \cite{AB} constructed an expander sequence of Cayley graphs of $\SL_d(\mathbb{F}_p)$ with bounded diameter-girth ratio. We give an alternative of their theorem finding a spanning subgraph of any good expander graph with bounded diameter-girth ratio. 

Sequences of expanders with increasing girth are interesting from the point of view of quantum ergodicity. Let $H_n$ be an expander sequence of growing size and girth. In \cite[Thm 1]{ALM} Anantharaman and Le Masson proved that in such a sequence the mass of a typical Laplace eigenvalue is well equidistributed over the vertices of the graph. Given any expander sequence $G_n$, the spanning subgraphs $H_n$ formed using Corollary~\ref{Cheeger} and Theorem~\ref{cor-Spectral} always fall into the scope of Anantharaman and Le Masson's result. For quantitative statements and interesting examples we refer to the paper of Alon, Ganguly and Srivastava \cite{ASS}.

Thomassen made the following conjecture \cite{T}.
Does for every $d$ and $g$ there exist a $D$ such that every finite graph with average degree 
at least $D$ contains a subgraph with average degree at least $d$ and girth at least $g$?
Our conjecture would imply Thomassen's conjecture for graphs with large enough Cheeger constant.

\section{The Lov\'asz Local Lemma}

One of the most useful basic facts in probability is the following. If there is a finite set of mutually independent events that each of 
them holds with positive probability then the probability that all events hold simultaneously is still positive, although small. 
The Lov\'asz Local Lemma allows one to show that this statement still holds in case of rare dependencies.  

We will use the so-called {\bf variable version} of the lemma: We will consider a set of mutually independent 
random variables. Given an event $A$ determined by these variables 
we will denote by $vbl(A)$ the unique minimal set of variables that determines the event $A$: 
such a set clearly exists. Note that given the events $A, B_1, \dots ,B_m$ if $vbl(A) \cap vbl(B_i) = \emptyset$ 
for every $1 \leq i \leq m$ then $A$ is mutually independent of all the events $B_1, \dots ,B_m$.

\begin{lemma}\cite{EL}
Let $\mathcal{V}$ be a finite set of mutually independent random variables in a probability space.
Let $\mathcal{A}$ be a finite set of events determined by these variables. If there exists an assignment
$x: \mathcal{A} \rightarrow (0,1)$ such that for every $A \in \mathcal{A}$ we have
$\mathbb{P}(A) \leq x(A) \prod_{vbl(A) \cap vbl(B) \neq \emptyset} (1-x(B))$ then $\mathbb{P} \big( \bigwedge_{A \in \mathcal{A}} \overline{A} \big) \geq  \prod_{A \in \mathcal{A}} (1-x(A)).$
\end{lemma}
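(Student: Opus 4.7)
The plan is to reduce the lemma to the following stronger inductive claim: for every event $A \in \mathcal{A}$ and every subset $S \subseteq \mathcal{A} \setminus \{A\}$, one has
\[
\Pr\!\left[A \,\Big|\, \bigcap_{B \in S} \overline{B}\right] \leq x(A).
\]
Once this is proved, the conclusion follows by fixing any enumeration $A_1,\ldots,A_n$ of $\mathcal{A}$, writing $\Pr[\bigcap_i \overline{A_i}]$ as a telescoping product of conditional probabilities (chain rule), and bounding each factor from below by $1 - x(A_i)$ using the inductive claim.

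The claim is proved by induction on $|S|$. The base case $S = \emptyset$ is immediate from the hypothesis, since the product $\prod_B (1 - x(B))$ is at most $1$. For the inductive step, I would partition $S = S_1 \sqcup S_2$, where $S_1 = \{B \in S : vbl(A) \cap vbl(B) \neq \emptyset\}$ are the events sharing variables with $A$, and write
\[
\Pr\!\left[A \,\Big|\, \bigcap_{B \in S} \overline{B}\right] \;=\; \frac{\Pr\!\left[A \cap \bigcap_{B \in S_1} \overline{B} \,\Big|\, \bigcap_{C \in S_2} \overline{C}\right]}{\Pr\!\left[\bigcap_{B \in S_1} \overline{B} \,\Big|\, \bigcap_{C \in S_2} \overline{C}\right]},
\]
and bound numerator and denominator separately.

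For the numerator, I would drop the intersection with $\bigcap_{B \in S_1} \overline{B}$ by monotonicity and then use that $A$ depends on variables disjoint from those determining $\bigcap_{C \in S_2} \overline{C}$, so the conditioning is superfluous and the numerator is at most $\Pr[A] \leq x(A)\prod_{B : vbl(A) \cap vbl(B) \neq \emptyset}(1 - x(B))$. For the denominator, I would expand via the chain rule along the events of $S_1$ in some order; at each step the conditioning set is a proper subset of $S$, so the inductive hypothesis applies and bounds each $\Pr[B_i \mid \cdots]$ by $x(B_i)$, yielding the lower bound $\prod_{B \in S_1}(1 - x(B))$. These factors cancel against matching terms in the numerator's product, the remaining factors there are $\leq 1$, and only $x(A)$ survives.

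The one point that deserves a moment of care---and the only place where the variable version of the lemma is genuinely used---is the independence step in the numerator: mutually independent random variables generate independent $\sigma$-algebras when split by disjoint index sets, so any event determined by $vbl(A)$ is independent of any event measurable with respect to $\bigcup_{C \in S_2} vbl(C)$, in particular of the Boolean combination $\bigcap_{C \in S_2} \overline{C}$. Beyond this minor measure-theoretic check, the argument is purely bookkeeping and I do not anticipate any real obstacle.
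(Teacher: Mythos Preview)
Your argument is the standard inductive proof of the Lov\'asz Local Lemma and is correct as outlined; the partition $S = S_1 \sqcup S_2$, the independence step for the numerator, and the chain-rule lower bound for the denominator are all handled properly. Note, however, that the paper does not supply its own proof of this lemma: it is stated as background and attributed to Erd\H os and Lov\'asz \cite{EL}, with the Moser--Tardos constructive version quoted immediately afterward. So there is no proof in the paper to compare against; your write-up simply fills in the classical argument that the paper takes for granted.
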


\section{The proof of Theorem~\ref{main}}

A large girth spanning subgraph with large minimum degree was obtained under the condition on the number of cycles in \cite{K}, see Theorem 7. 
Now we have an expansion condition instead of the bound on the minimum degree.
With the application of Lov\'asz Local Lemma in mind it would be natural to consider the following probability distribution on spanning subgraphs: every edge is chosen with the same probability, independently. This is more or less what we are going to do, but for technical reasons it will be more convenient to work with a random directed subgraph first and turn it into an undirected one at the end of the argument.

Let the parameters $\Delta, \delta, g$ be as in the statement of Theorem~\ref{main}. We consider a random directed graph $D$ on the vertex set $V(G)$ constructed as follows.
For every $(x,y) \in E(G)$ we add $(x,y)$ to $E(D)$ with probability $\frac{\delta}{\Delta}$, and we add $(y,x)$ to $E(D)$ with probability $\frac{\delta}{\Delta}$.
All these choices are independent. The undirected graph $H$ on the vertex set $V(G)$ is determined by $D$ according to the following rule: an undirected edge is present in $E(H)$ if at least one of the corresponding directed edges is in $E(D)$, that is, \\
$E(H)=\{(x,y): (x,y) \in E(D) \text{ or } (y,x) \in E(D)\}$.

We will apply the Local Lemma to this probability space. The set of variables $\mathcal{V}$ will correspond to the directed edges of $G$: 
Two directed edges correspond to every edge ($G$ has no loops).
We will call a cycle in $G$ {\it short} if it is shorter than $g$. The ''bad events'' of $\mathcal{A}$ correspond to either short cycles
or connected subsets of vertices. Let $\mathcal{C}$ denote the set of short cycles in $G$, and $\mathcal{S}$ the set of connected
subsets of vertices. For $C \in \mathcal{C}$ let $A_C$ denote the event that $E(C) \subseteq E(H)$.
Given a set $S \in \mathcal{S}$ let $A_S$ denote the event that \[|\partial^{\rm out}_D S| < \frac{\delta}{2\Delta} |\partial_G (S)|-(2\log(\Delta)+4)|S|,\] where
$\partial^{\rm out}_D S$ denotes the set of directed edges in $D$ with starting point in $S$ and endpoint not in $S$. Note that if this inequality holds for every connected set $S$ then it holds for every set.
To prove the theorem it suffices to show that with positive probability the set of events $\{ A_C: C \in \mathcal{C} \} \cup \{ A_S: S \in \mathcal{S}\}$ can be avoided. 

As advertised before we are going to use the Local Lemma. We will choose the values $x(A_C),x(A_S)$ later. The first order of business is to bound the probabilities of events $A_C,A_S$.

{\bf Claim 1:} Let $C \in \mathcal{C}$ be a cycle of length $k$ in $G$. Then $\mathbb{P} (A_C) \leq (\frac{2\delta}{\Delta})^k$.

{\bf Claim 2:} For every $S \in \mathcal{S}$ the inequality $\mathbb{P}(A_S) \leq (16\Delta)^{-|S|}$ holds.
Since the choices or edges are independent, the first Claim requires no explanation.
\textbf{ Proof of Claim 2:}
Consider the edges in $\partial_G (S)$. For every such edge we make a choice if we include it in $\partial^{out}_D S$ (with the right direction).
These are independent choices. We apply the Chernoff bound. 
Given a real number $\lambda$ consider the expected value
\begin{align*}\mathbb{E}\left( \exp \big( \lambda |\partial^{out}_D S| \big)\right)= \sum_{k=0}^{|\partial_G (S)|} \binom{|\partial_G (S)|}{k} \bigg( \frac{e^\lambda \delta}{\Delta}\bigg)^k  \bigg( \frac{\Delta-\delta}{\Delta}\bigg)^{|\partial_G (S)|-k}= \\
\bigg( 1 + \frac{(e^\lambda-1) \delta}{\Delta}\bigg)^{|\partial_G (S)|}.\end{align*}
Since $1 + \frac{(e^\lambda-1) \delta}{\Delta} \leq \exp \Big(\frac{(e^\lambda-1)\delta}{\Delta} \Big)$, the Markov inequality implies 
\[\mathbb{P}\Bigg(\exp \big(\lambda |\partial^{\rm out}_D S| \big) \geq (10\Delta)^{|S|} \exp \Big( \frac{(e^\lambda-1)\delta}{\Delta}|\partial_G (S)| \Big) \Bigg) \leq (16\Delta)^{-|S|}.\]
Choosing $\lambda=-\log(2)$ we get that
\[\mathbb{P}\Big(|\partial^{\rm out}_D S| \leq \frac{-\log(16\Delta)}{\log(2)}|S|  + \frac{\delta}{2\log(2)\Delta}|\partial_G (S)| \Big) \leq (16\Delta)^{-|S|}\]
The claim follows, since $1/2 < \log(2) < 1$.
\begin{lemma}\label{lem-setnum}
Let $G$ be a graph, $x$ a vertex of $G$ and $s$ a positive integer.
Then the number of connected subsets of vertices in $V(G)$ of size $s$ containing $x$ is at most
$\Delta(G)(\Delta(G)-1)^{s-2} {2s-2 \choose s-1} \leq (4\Delta(G))^{s-1}.$
\end{lemma}

\begin{proof}
We can assume that the connected component containing $x$ has at least $s$ elements as otherwise there is nothing to prove. Given a connected subset of vertices containing $x$ we explore it by running a Depth First Search algorithm starting at $x$.
We keep track of the log of the walk. If the number of explored vertices is less than $s$ and it is possible to move to an unexplored vertex we do it, otherwise we move backwards. The exploration ends when we discovered $s$ vertices and moved back to $x$.
In total we make $(s-1)$ forward steps, since a new vertex is explored at every forward step, and $(s-1)$ backward steps.
There are at most ${2s-2 \choose s-1}$ ways to choose the order of forward and backward steps. We have at most $(\Delta(G)-1)$ choices when making a forward step,
except in the first step, when we have at most $\Delta(G)$ possibilities. There is only one choice for every backward step. The number of subgraphs of size $s$ containing $x$ is bounded by the number of possible logs, hence it is at most $\Delta(\Delta-1)^{s-2}{2s-2 \choose s-1}.$ The lemma follows.
\end{proof}

We choose the function $x$ in order to apply the Local Lemma. For every $S \in \mathcal{S}$ set $x(A_S)=((8\Delta)^{|S|}+1)^{-1}$, and for every $C \in \mathcal{C}$
set $x(A_C)=\big(\frac{4\delta}{\Delta}\big)^{|C|}$, where $|C|$ denotes the length of the cycle $C$. We have to check the condition of the Local Lemma. 

First, we bound the product of terms of the form $(1-x(A))$. We will have separate bounds for short cycles and connected subsets.  
Let $v \in V(G)$. We use that the number of cycles of length $k$ containing $v$ is at most $\big( \frac{\Delta}{8 \delta} \big)^k$.
\begin{align*}\prod_{v \in C \in \mathcal{C}} (1-x(A_C)) =&  \prod_{k=3}^{g-1} \prod_{v \in C \in \mathcal{C}, |C|=k} 
\Big(1-\Big( \frac{4\delta}{\Delta} \Big)^k \Big)\geq \prod_{k=3}^{g-1} \Big(1-\Big( \frac{4\delta}{\Delta} \Big)^k \Big)^{\big( \frac{\Delta}{8 \delta} \big)^k} \\  \geq& \prod_{k=3}^{\infty} \exp(-2^{-k}) = e^{-1/4}.\
 \end{align*}
Similarly, using Lemma \ref{lem-setnum}, we have 
\begin{align*}\prod_{v \in S \in \mathcal{S}} (1-x(A_S)) =& \prod_{k=1}^{\infty} \prod_{\substack{v \in S \in \mathcal{S}\\ |S|=k}}(1-x(A_S))
\geq\prod_{k=1}^{\infty}  \exp \big(-(8\Delta)^{-k} (4\Delta)^{k-1} \big)\\  \geq& \exp \Big( \frac{1}{4\Delta} \sum_{k=1}^{\infty} -2^{-k}\Big) = e^{-\frac{1}{4\Delta}}\end{align*}
For every $S \in \mathcal{S}$ the inequality
\begin{align*}\mathbb{P}(A_S) \leq& (16\Delta)^{-|S|} \leq ((8\Delta)^{|S|}+1)^{-1} \exp \Big( -\frac{\Delta+1}{4\Delta} \Big)^{|S|} \\ \leq& 
x(A_S) \Pi_{v \in S} \prod_{v \in C \in \mathcal{C}} (1-x(C)) \prod_{v \in T \in \mathcal{S}} (1-x(A_T))\end{align*} holds, as needed.
Similarly, for every $C \in \mathcal{C}$
\begin{align*}\mathbb{P}(A_C) \leq& \Big( \frac{2\delta}{\Delta} \Big)^{|C|} \leq \Big(\frac{4\delta}{\Delta}\Big)^{|C|} \exp \Big( -\frac{\Delta+1}{4\Delta} \Big)^{|C|}\\
 \leq& x(A_C) \prod_{v \in C} \prod_{v \in D \in \mathcal{C}} (1-x(A_D)) \prod_{v \in S \in \mathcal{S}} (1-x(S)).\end{align*}
The assumptions of the Local Lemma are satisfied so with a positive probability none of the events in $\mathcal A$ occur. This implies that the undirected graph $H$ satisfies the conclusion of Theorem~\ref{main} with positive probability.

\section{The proof of Theorem \ref{cor-Spectral}}


\begin{lemma}\label{lem-returns}
Let $G$ be a finite graph of maximum degree $\Delta$. Let $\lambda_2(G)$ be the second eigenvalue of the Laplacian operator $\mathcal L$, defined as in  (\ref{ADLdef}). Then, for every vertex $v\in V(G)$ the number of closed walks of length $k$ starting at $v$ is at most \[\begin{cases}\Delta^k|V(G)|^{-1}+(\Delta-\lambda_2(G))^k & \text{ if } k \text{ is odd,}\\
4\Delta^{k+1}|V(G)|^{-1} +2(\Delta-\lambda_2)^{k+1} & \text{ if } k \text{ is even.}\end{cases}\]
\end{lemma}
\begin{proof}
Let $\mathcal A, \mathcal D$ be the operators defined in (\ref{ADLdef}). Write $V:=V(G), \lambda_i:=\lambda_i(G).$ Let $t_k(v)$ be the number of closed walks of length $k$ starting at $v$. 
To bound $t_k(v)$ we consider the cases of odd and even $k$ separately. 

We start with the simpler case of odd $k$:
Both $\mathcal A$ and $\mathcal (\Delta-\mathcal D)$ are non-negative operators, so we have
\[ t_k(v)=\langle \mathcal A^k \mathbf 1_v,\mathbf 1_v\rangle\leq \langle (\mathcal A+\Delta - \mathcal D)^k \mathbf 1_v,\mathbf 1_v\rangle=\langle \mathcal (\Delta -\mathcal L)^k \mathbf 1_v,\mathbf 1_v\rangle.\]  
Let $f_1,\ldots ,f_n$ be the normalized eigenvectors of $\mathcal L$, with $f_1=|V(G)|^{-1/2}\mathbf 1_{V(G)}.$ We have $\mathbf 1_v=\sum_{i=1}^n \alpha_i f_i, $ with $\alpha_1=|V(G)|^{-1/2}$ and $\sum_{i=1}^n \alpha_i^2=1$. Hence 
\[ \langle (\Delta -\mathcal L) ^k \mathbf 1_v,\mathbf 1_v\rangle= \Delta^k|V(G)|^{-1}+ \sum_{i=2}^n |\alpha_i|^2 (\Delta-\lambda_i)^k\leq \Delta^k|V(G)|^{-1}+(\Delta-\lambda_2)^k.\]

If $k$ is even, the inequality $(\Delta-\lambda_i)^k\leq (\Delta-\lambda_2)^k$ might not hold. We must modify our argument, which will result in a slightly weaker bound. Choose a neighbour $v'$ of $v$. Every closed walk of length $k$ starting at $v$ can be extended uniquely to a walk of length $k+1$ ending at $v'$. Hence,
\[t_k(v)\leq \langle \mathcal A^{k+1} \mathbf 1_v,\mathbf 1_v'\rangle\leq \langle \mathcal A^{k+1}(\mathbf 1_v+\mathbf 1_{v'}),(\mathbf 1_v+\mathbf 1_{v'})\rangle. \]
 Write $F:=(\mathbf 1_v+\mathbf 1_{v'})$ using the normalized eigenbasis: \[F=\beta_1 f_1+\beta_2 f_2+\ldots \beta_n f_n.\] Then, $\beta_1=2/|V|^{1/2}$ and $\sum_{i=1}^n \beta_i^2=\|(\mathbf 1_v+\mathbf 1_{v'})\|^2=2.$ The operators $\mathcal A,\Delta-\mathcal D$ are non-negative so
\begin{align*}\langle \mathcal A^{k+1}F,F\rangle&\leq \langle (\mathcal A+\Delta-\mathcal D)^{k+1}F,F\rangle=\langle \mathcal (\Delta-\mathcal L)^{k+1}F,F\rangle\\ &= \frac{4\Delta^{k+1}}{|V|}+\sum_{i=2}^n\beta_i^2(\Delta-\lambda_i)^{k+1}\\
&< \frac{4\Delta^{k+1}}{|V|} +2(\Delta-\lambda_2)^{k+1}.\end{align*}

\end{proof}

\begin{corollary}\label{returning-bound} Let $g\in\mathbb N$ and assume that $|V(G)|\geq 4|\Delta|^{m}(\Delta-\lambda_2)^{-m}$ 
for all $0\leq m\leq \max\{g, 8\log \Delta\}+1$ and $\lambda_2\geq \Delta-\frac{\Delta}{16(\log \Delta +2)}$. Then, for every vertex $v\in V(G)$ and $k\leq g$, the number of returning walks of length $k$ starting at $v$ is at most $2^k(\Delta-\lambda_2(G))^k.$
\end{corollary}
\begin{proof}
If $k$ is odd then 
\[t_k(v)\leq \frac{\Delta^k}{|V|}+(\Delta-\lambda_2)^k\leq 2(\Delta-\lambda_2)^k\leq 2^k(\Delta-\lambda_2)^k.\]
If $k$ is even and $\max\{g, 8\log \Delta\}\geq k\geq 0$ then 
\[t_k(v)\leq \frac{4\Delta^{k+1}}{|V|}+2(\Delta-\lambda_2)^{k+1}\leq 3(\Delta-\lambda_2)^{k+1}.\]
If $k\geq 4\log \Delta$ then $2^{k-1}\geq  \frac{3\Delta}{32}\geq \frac{3\Delta}{16(\log \Delta+2)}$ so 
\[t_k(v)\leq 3(\Delta-\lambda_2)^{k+1}\leq 2^k(\Delta-\lambda_2)^k.\]
If $k\leq 4\log \Delta$ we choose $m$ natural such that $\max\{g, 8\log \Delta\}\geq km \geq 4\log \Delta$. The previous case yields 
\[t_{mk}(v)\leq 2^{mk}(\Delta-\lambda_2)^{mk}.\] The number of closed walks satisfies $t_{a+b}(v)\geq t_a(v)t_b(v)$, so 
\[t_k(v)\leq t_{mk}(v)^{1/m}\leq 2^k(\Delta-\lambda_2)^k.\]
\end{proof}

\begin{proof}[Proof of Theorem \ref{cor-Spectral}]
The number of cycles of length $k$ containing $v$ is denoted by $c_k(v)$. This is bounded by the number of closed walks of length $k$ starting at $v$. Corollary \ref{returning-bound} implies $c_k(v) \leq t_k(v)\leq 2^k (\Delta-\lambda_2(G))^k$ for each $k\leq g$. Choose $\delta=\Delta(\Delta-\lambda_2(G))^{-1}/16$. The cycle condition in Theorem \ref{main} is satisfied, so we get a spanning subgraph $H$ of girth at least $g$ such that 
\begin{align*}|\partial_H S| \geq& \frac{\delta}{2\Delta}|\partial_G S|-(2\log\Delta+4)|S|\geq \left(\frac{\delta}{4\Delta} \lambda_2(G)-(2\log \Delta+4)\right)|S|\\
=& \left(\frac{\lambda_2(G)(\Delta-\lambda_2(G))^{-1}}{64}-2\log \Delta -4\right) |S|,
\end{align*} for every subset $S\subset V(G)$ with $|S|\leq |V(G)|/2$.
If $\lambda_2(G) \geq \Delta-\frac{\Delta}{192(\log \Delta+2)}$ then $(\Delta-\lambda_2(G))^{-1}/64\geq 3(\log \Delta+2)\Delta^{-1}$. We have 
\[|\partial_H S|\geq \left(((1-\frac{1}{384})3-2)(\log\Delta+2)\right)|S|\geq 0.99(\log \Delta +2)|S|. \]

The Cheeger constant of $H$ satisfies $h(H)\geq 0.99(\log \Delta +2).$
\end{proof}

\begin{remark}
The question if our proofs can be turned into an effective algorithm is quite challenging. The natural approach is to rely on the results of Moser and Tardos
\cite{MT}. Their randomized algorithm is based on iterative resampling. They start with a random sample of the variables and until there exists a single bad event that is not avoided, they resample the variables of this event. It does not matter which bad event they choose, the expected number of resamplings is
$\sum \frac{x(A)}{1-x(A)}$, where the summation goes over all bad events. This would give a polynomial expected running time in our case.
 
Unfortunately, it is not clear, how we detect which bad event holds. While short cycles are easy to find a set with small expansion is not.
Hence, in case of  Theorem~\ref{main} we do not know any efficient approach: in case of this theorem we restrict to a family of the subsets of $V(G)$,
subsets with large expansion in $G$, and we want to decide if they all have large expansion. This might be hard in certain cases and it is not expected to have a polynomial time algorithm, not even in quantitatively weaker forms: Small Set Expansion problem, a problem known to be harder than Unique Games, can be phrased this way \cite{RS}.

On the other hand, in case of Theorem \ref{cor-Spectral} we can have an efficient algorithm that outputs an $H$ with a weaker bound 
on the Cheeger constant. If we have a set with small relative boundary then we are able to find a set with somewhat larger relative boundary. Assume that there exists a function $f\colon \mathbb R\to \mathbb R$ and a polynomial time algorithm that finds for every graph $G$ with Cheeger constant at most $\gamma$ a set with expansion at most $f(\gamma)$ (for example, one can do this using eigenvectors and the Cheeger inequalities). This makes it possible to run the Moser-Tardos algorithm. This algorithm gives a constructive proof of Corollary \ref{Cheeger} and Theorem \ref{cor-Spectral} but the bound on the Cheeger constant is $f(\gamma),$ instead of the original bound $\gamma$. 

While the Moser-Tardos approach often allows derandomization using an expected value argument choosing the values of the variables one by one, this does not seem easy in our case, since the number of bad events is exponentially large. 
\end{remark}

\begin{remark}
The Moser-Tardos algorithm is also efficient in the sense that it gives a factor of iid solution on infinite graphs, too \cite{K}.
Moreover, the choice of the edges at every vertex will depend only on a ball of constant radius centered at the vertex for all, 
but an exponentially small proportion of the vertices. It would be interesting to derandomize this algorithm. 
\end{remark}

\section{Future directions}

It would be interesting to have an efficient, deterministic algorithm that constructs the spanning subgraph $H$.

Graph sparsification turned out to be a successful method in the last decade, see the work of Spielman and Srivastava \cite{SS}. 
A Local Lemma based sparsification could be very interesting, especially if it provides a large girth 
graph in certain cases. Unfortunately, our proof does not achieve this goal, but a similar approach might work.

A measurable solution to the von Neumann problem with spectral gap and Lipschitz condition could probably have many applications, see \cite{GL, H, K} 
for details. Theorem~\ref{cor-Spectral} implies the finite analogue of this.

Are there approachable high dimensional formulations of the conjecture for the several variants of high dimensional expanders \cite{L}?
For the standard definitions and background see \cite{HLW}. 

Let us end by recalling an old conjecture: there is no sequence of finite bounded degree graphs growing  in size  to infinity, so that all the induced balls in all the graphs in the sequence are uniform expanders. 
For a related progress see \cite{FV}.

\textbf{Acknowledgment.}
The authors thank to G\'abor Pete for encouraging this collaboration, to Elad Tzalik for his suggestions to improve the paper and to Merav Parter for her comments on possible applications.


\begin{thebibliography}{BKC}

 \bibitem{AGV}
M. Abért, Y. Glasner, and B. Virág. "The measurable Kesten theorem", Annals of Probability, 44(3), (2016), 1601-1646,





\bibitem{ASS} N. Alon, S. Ganguly, and N. Srivastava. "High-girth near-Ramanujan graphs with localized eigenvectors." arXiv preprint arXiv:1908.03694.

\bibitem{ALM}
N. Anantharaman, and E. Le Masson. "Quantum ergodicity on large regular graphs", Duke Math. J. 164(4), (2015), 723--765, 

\bibitem{AB}
G. Arzhantseva, and A. Biswas, 
Large girth graphs with bounded diameter-by-girth ratio,
arXiv preprint arXiv:1803.09229.

\bibitem{BS}
I. Benjamini, and O. Schramm,
"Every graph with a positive Cheeger constant contains a tree with a positive Cheeger constant",
GAFA (1997), 403--419.



\bibitem{BL} Y. Bilu, and N. Linial, "Lifts, discrepancy and nearly optimal spectral gap", Combinatorica 26.5 (2006): 495--519.

\bibitem{BoGa} 
J. Bourgain, and A. Gamburd, 
"Uniform expansion bounds for Cayley graphs of $\SL_2(\mathbb F_p)$",
Ann. of Math, 167, (2008), 625--642.

\bibitem{BV} 
J. Bourgain, and P. Varj\'u, 
"Expansion in ${\rm SL}_d (\mathbb Z/q \mathbb Z)$, $q$ arbitrary", Inventiones  Mathematicae. 188,  (2012), 151--173. 


\bibitem{BG}
E. Breuillard, and T. Gelander,
"A topological Tits alternative".
Ann. of Math, 166, (2007), 427--474.

\bibitem{T} 
T. Carsten, "Girth in graphs", J. Combin. Theory Ser. B 35(2) (1983):129--141.

\bibitem{DSS} 
W. A. Deuber, M. Simonovits, and V. T. S\'os, "A note on paradoxical metric spaces", Studia Scientiarum Mathematicarum Hungarica (1995), 17--23, see 
http://www.renyi.hu/~miki/walter07.pdf for an extended version by G. Elek and V. T. S\'os.


\bibitem{Dod} 
J. Dodziuk, "Combinatorial Laplacians and isoperimetgric inequality, From Local Times to Global Geometry", Control and Physics (1986): 68--74.




\bibitem{EL} 
P. Erd\H os, and L. Lov\'asz, "Problems and results on 3-chromatic hypergraphs and some related questions", 
In Infinite and finite sets (Colloq., Keszthely, 1973; dedicated to P. Erd\H os on his 60th birthday),  (1975)
Vol. II, pages 609-627. Colloq. Math. Soc. Janos Bolyai, Vol. 10. North-Holland, Amsterdam. 

\bibitem{FV}
M. Fraczyk, and W. van Limbeek,
"Heat kernels are not uniform expanders",
arXiv preprint arXiv:1905.13584.

\bibitem{GL} 
D. Gaboriau, and R. Lyons, "A Measurable-Group-Theoretic Solution to von Neumann's Problem", Inventiones Mathematicae, 177 (2009):533--540.

\bibitem{Gl} Y. Glasner, "Ramanujan Graphs with Small Girth", Combinatorica 23, (2003): 487–502.

\bibitem{HLW}
S. Hoory, N. Linial, and A. Wigderson, "Expander graphs and their applications". Bull. Amer. Math. Soc. (N.S.) 43 no. 4 (2006): 439--561. 

\bibitem{H}
C. Houdayer, "Invariant percolation and measured theory of nonamenable groups (after Gaboriau-Lyons, Ioana, Epstein)". S\'eminaire Bourbaki. Vol. 2010/2011. No. 348 (2012), 
Exp. No. 1039:339-374.

\bibitem{K}
G. Kun, "Expanders have a spanning Lipshitz subgraph with large girth", arXiv preprint arXiv:1303.4982.




\bibitem{L}
A. Lubotzky, 
"High dimensional expanders", Proceedings of the International Congress of Mathematicians, Rio de Janeiro 2018. Vol. I. Plenary lectures, (2018)  World Sci. Publ., Hackensack, NJ:705–-730.

\bibitem{MT} 
R. Moser, and G. Tardos, 
"A constructive proof of the general Lov\'asz Local Lemma", Journal of the ACM 57 (2010) (2) Art. 11,




\bibitem{Puder}
 D. Puder, "Expansion of random graphs: New proofs, new results". Inventiones Mathematicae 201, no. 3 (2015):845--908.

\bibitem{RS}
P. Raghavendra, and D. Steurer, "Graph expansion and the unique games conjecture." Proceedings of the forty-second ACM symposium on Theory of computing, (2010)

\bibitem{SS}
D. Spielman, and N. Srivastava, 
"Graph sparsification by effective resistances." 
SIAM Journal on Computing 40.6 (2011):1913-1926.

\end{thebibliography}
\end{document}